\newtheorem{theorem}{Theorem}[section]
\newtheorem*{theorem*}{Theorem}
\newtheorem*{lemma*}{Lemma}
\newtheorem*{remark*}{Remark}
\newtheorem*{definition*}{Definition}
\newtheorem*{proposition*}{Proposition}
\newtheorem*{corollary*}{Corollary}
\numberwithin{equation}{section}
\newcommand{\real}{\mathbb{R}}
\def\qed{\,\unskip\kern 6pt \penalty 500
\raise -2pt\hbox{\vrule \vbox to8pt{\hrule width 6pt
\vfill\hrule}\vrule}\par}
\definecolor{darkblue}{rgb}{0.05, .05, .65}
\definecolor{darkgreen}{rgb}{0.1, .65, .1}
\definecolor{darkred}{rgb}{0.8,0,0}
\newcommand{\beqn}{\begin{equation}}
\newcommand{\eeqn}{\end{equation}}
\newcommand{\bear}{\begin{eqnarray}}
\newcommand{\eear}{\end{eqnarray}}
\newcommand{\bean}{\begin{eqnarray*}}
\newcommand{\eean}{\end{eqnarray*}}
\begin{document}

\title{\huge \bf A critical non-homogeneous heat equation with weighted source}

\author{
\Large Razvan Gabriel Iagar\,\footnote{Departamento de Matem\'{a}tica
Aplicada, Ciencia e Ingenieria de los Materiales y Tecnologia
Electr\'onica, Universidad Rey Juan Carlos, M\'{o}stoles,
28933, Madrid, Spain, \textit{e-mail:} razvan.iagar@urjc.es},\\
[4pt] \Large Ariel S\'{a}nchez,\footnote{Departamento de Matem\'{a}tica
Aplicada, Ciencia e Ingenieria de los Materiales y Tecnologia
Electr\'onica, Universidad Rey Juan Carlos, M\'{o}stoles,
28933, Madrid, Spain, \textit{e-mail:} ariel.sanchez@urjc.es}\\
[4pt] }
\date{}
\maketitle

\begin{abstract}
Some qualitative properties of radially symmetric solutions to the non-homogeneous heat equation with critical density and weighted source
$$
|x|^{-2}\partial_tu=\Delta u+|x|^{\sigma}u^p, \quad (x,t)\in\real^N\times(0,T),
$$
are obtained, in the range of exponents $p>1$, $\sigma\ge-2$. More precisely, we establish conditions fulfilled by the initial data in order for the solutions to either blow-up in finite time or decay to zero as $t\to\infty$ and, in the latter case, we also deduce decay rates and large time behavior. In the limiting case $\sigma=-2$ we prove the existence of non-trivial, non-negative solutions, in stark contrast to the homogeneous case. A transformation to a generalized Fisher-KPP equation is derived and employed in order to deduce these properties.
\end{abstract}

\

\noindent {\bf Mathematics Subject Classification 2020:} 35A22, 35B33, 35B36, 35B44, 35K57, 35K67.

\smallskip

\noindent {\bf Keywords and phrases:} non-homogeneous heat equation, critical density, reaction-diffusion equations, critical exponents, decay rate, finite time blow-up.

\section{Introduction}

The goal of this short note is to establish some properties of solutions to the following non-homogeneous heat equation with a weighted source
\begin{equation}\label{eq1}
|x|^{-2}\partial_tu=\Delta u+|x|^{\sigma}u^p, \quad (x,t)\in\real^N\times(0,T), \quad T\in(0,\infty],
\end{equation}
in the range of exponents $\sigma\ge-2$, $p>1$. More precisely, we consider radially symmetric solutions to \eqref{eq1} and we aim at establishing conditions on the exponents $p$, $\sigma$ and on the (radially symmetric) initial condition
\begin{equation}\label{ic}
u(x,0)=u_0(|x|)\in C(\real^N)\cap L^{\infty}(\real^N), \quad u_0\geq0, \quad u_0\not\equiv0,
\end{equation}
implying either finite time blow-up of solutions to Eq. \eqref{eq1} or global existence and decay as $t\to\infty$, and in the latter case giving a decay rate and (under suitable conditions) the large time behavior as $t\to\infty$. In order to ease the notation, we denote the radial variable by $r=|x|$ and we recall here for the readers' convenience that, in this variable, Eq. \eqref{eq1} writes
\begin{equation}\label{eq2}
r^{-2}u_t=u_{rr}+\frac{N-1}{r}u_r+r^{\sigma}u^p,
\end{equation}
where, as usual, the subscripts indicate derivatives.

Eq. \eqref{eq1} features a competition between a non-homogeneous heat equation with a weight (usually referred as density function by physical reasons) $\varrho(x)=|x|^{-2}$ and a source term also weighted with a power $|x|^{\sigma}$. Equations in the more general form 
\begin{equation}\label{nhpme}
\varrho(x)u_t=\Delta u^m+q(x)u^p, \quad m\geq1, \quad p>1,
\end{equation}
for suitable functions $\varrho(x)$ (usually called \emph{density function}) and $q(x)$ have been proposed in several models in radial transport in confined plasma \cite{KR81, KR82}, in the theory of combustion with a power-law temperature depending on the source \cite{KKMS80, KK04}, or (without the reaction term) in a kinetic model that describes the evolution of the probability density of the number of firms in a society \cite{T16}, just to give some examples. Such models justified a development of the mathematical theory of equations in the form \eqref{nhpme}, with or without a reaction term, and such equations are usually referred in literature under the name of \emph{non-homogeneous heat equation} (if $m=1$) or \emph{non-homogeneous porous medium equation} (if $m>1$).

It has been thus noticed that densities $\varrho(x)$ either being exactly equal to $|x|^{-2}$ or behaving as $|x|^{-2}$ as $|x|\to\infty$ are \emph{critical for the dynamic properties} of \eqref{nhpme}. Indeed, restricting ourselves to our case of interest $m=1$, a number of works have addressed the mathematical analysis of solutions to Eq. \eqref{nhpme}, but the case $\varrho(x)\sim |x|^{-2}$ has been avoided therein, see for example \cite{dPRS13, LX14, ZW08}, where densities behaving like $|x|^{-\sigma}$ are considered for either $\sigma\in(0,2)$ or $\sigma>2$. A similar criticality of $\varrho(x)=|x|^{-2}$ has been observed also in the porous medium case $m>1$ in \cite{IS14, KRV10, KKMS80, KK04, MT07, MP20, MP21, MP21b, RV06}, to quote but a few references dealing with the mathematical study of \eqref{nhpme}.

Regarding the equation \eqref{nhpme} with $\varrho(x)=|x|^{-2}$, the authors established well-posedness and large time behavior of solutions to \eqref{nhpme} with $m=1$ and $q(x)=0$ in \cite{IS13} (generalizing then the technique also to the non-homogeneous porous medium equation without source term in \cite{IS14}), these properties being deduced by exploiting a transformation mapping its radially symmetric solutions to those of a standard heat equation. Afterwards, Toscani described in \cite{T16} a kinetic model leading to a rather similar equation and employing an alternative transformation in its study. More recently, the authors considered in \cite{IS23} a general class of equations with two weights, namely
\begin{equation}\label{eq1.gen}
|x|^{\sigma_1}\partial_tu=\Delta u^m+|x|^{\sigma_2}u^p
\end{equation}
and established a number of qualitative properties of their solutions (such as Fujita-type exponents, second critical exponent limiting between finite time blow-up and global existence, classification of self-similar solutions) by introducing and then exploiting a number of transformations, at the level of radially symmetric solutions, mapping solutions to Eq. \eqref{eq1.gen} to solutions to some other reaction-diffusion equations whose properties were well-established prior to this research. However, Eq. \eqref{eq1} (which is actually a limiting case for the ranges considered in \cite{IS23}) has been left out of the study in \cite{IS23} due to the fact that the main bunch of transformations therein were not applicable to it.

The present work is thus devoted to fill in this gap by establishing some qualitative properties realted to the evolution of radially symmetric solutions to Eq. \eqref{eq1}. This is done by means of a transformation detailed in Section \ref{sec.transf} mapping such solutions to Eq. \eqref{eq1} to solutions to a generalized Fisher-KPP type equation, which allows us to translate features of the latter equation to the former one by undoing the transformation. With the aid of it, we are able to give conditions on $p>1$ and on the radially symmetric initial condition $u_0$ such that the solution to the Cauchy problem \eqref{eq1}-\eqref{ic} either blows up in a finite time $T\in(0,\infty)$ or is global in time and decays to zero as $t\to\infty$; in the latter, we also give the large time behavior of such solutions.

In order to fix the notation employed throughout this short note, let us introduce here two critical exponents who will play a significant role in the forthcoming study,
\begin{equation}\label{crit.exp}
p_c(\sigma):=\frac{N+\sigma}{N-2}, \qquad p_s(\sigma):=\frac{N+2\sigma+2}{N-2}, \quad N\geq3,
\end{equation}
the second being usually referred to as the \emph{Sobolev critical exponent} in the context of the reaction-diffusion equation
$$
\partial_tu=\Delta u+|x|^{\sigma}u^p,
$$
see for example \cite{FT00, MS21}, both of them being equal by convention to $+\infty$ in dimension $N\in\{1,2\}$. In particular, the analysis in Sections \ref{sec.bu} and \ref{subsec.ltb} shows that $p_c(\sigma)$ is the \emph{Fujita-type exponent} to Eq. \eqref{eq1} in dimension $N\geq3$, while $p_s(\sigma)$ appears in relation to the behavior of a \emph{new explicit solution} being a separatrix between global existence and blow-up, see Section \ref{subsec.sep}.

Before passing to the precise statements and proofs, we stress that we do not aim at constructing a functional-analytic theory of Eq. \eqref{eq1} in this note. However, in the particular framework of radial symmetry, the well-posedness for Eq. \eqref{eq2} follows directly from the well-posedness property of the resulting Fisher-KPP type equation by undoing the transformation in Section \ref{sec.transf}. We refrain here from entering the more detailed discussion of a notion of weak solution and the well-posedness, regularity and other properties of Eq. \eqref{eq1} without the assumption of radial symmetry.

\section{The transformation}\label{sec.transf}

The following transformation, which is the main tool in our analysis, has been announced (but not employed in any form) in \cite{IS23}. We describe it below in more detail. Let $u$ be a radially symmetric solution to Eq. \eqref{eq1}, that is, a solution to Eq. \eqref{eq2}. In a first step, set
\begin{equation}\label{transf1}
w(y,t)=r^{\frac{\sigma+2}{p-1}}u(r,t), \quad y=\ln\,r, \quad r\in(0,\infty).
\end{equation}
After performing straightforward calculations, we deduce that $w(y,t)$ solves the equation
\begin{equation}\label{interm1}
w_t=w_{yy}+Kw_y-K_0w+w^p,
\end{equation}
with
\begin{equation}\label{interm2}
K_0:=\frac{\sigma+2}{p-1}\left[N-2-\frac{\sigma+2}{p-1}\right], \quad K:=N-2-\frac{2(\sigma+2)}{p-1}.
\end{equation}
Observe that, for $N\geq3$, the previous constants can be written in terms of the critical exponents \eqref{crit.exp} as
$$
K_0=\frac{(\sigma+2)(N-2)}{(p-1)^2}(p-p_c(\sigma)), \quad K=\frac{N-2}{p-1}(p-p_s(\sigma)).
$$
In a second step, we introduce the function $\Psi$ as follows:
\begin{equation}\label{transf2}
\Psi(z,t)=w(y,t), \quad z=y+Kt,
\end{equation}
with $K$ defined in \eqref{interm2}. Then the function $\Psi$ solves the following one-dimensional generalized Fisher-KPP type equation
\begin{equation}\label{Fisher}
\Psi_t=\Psi_{zz}-K_0\Psi+\Psi^p.
\end{equation}
We may thus compose the transformations \eqref{transf1} and \eqref{transf2} to map \eqref{eq2} into \eqref{Fisher}. Let us also notice here that the initial conditions for the two equations are mapped in the following way
\begin{equation}\label{transf.ic}
u_0(r)=r^{-(\sigma+2)/(p-1)}w_0(r)=e^{-(\sigma+2)z/(p-1)}\Psi_0(z),
\end{equation}
a fact that is useful when dealing with Cauchy problems. Since the well posedness of \eqref{Fisher} is a standard property in the sense of classical solutions (for the uniqueness and comparison principle, even in more general cases, see for example \cite{DK12, DJ23}), and the transformations \eqref{transf1} and \eqref{transf2} preserve the order between two solutions at a fixed point $(r,t)$, we obtain local existence and uniqueness for \eqref{eq2} by undoing the previous transformations (with solutions that remain classical except, in some cases, at $r=0$). The properties of solutions to \eqref{Fisher} needed in the forthcoming analysis have been deduced by the authors (in a framework of a more general study) in \cite{IS24}. We are thus in a position to translate these features into properties of solutions to Eq. \eqref{eq2} by undoing the previous changes of variable \eqref{transf1}-\eqref{transf2}.

\section{Finite time blow-up for $1<p<p_c(\sigma)$ or $N\in\{1,2\}$}\label{sec.bu}

The result below, together with the one in the next Section \ref{sec.decay}, show that the exponent $p_c(\sigma)$ introduced in \eqref{crit.exp} plays the role of a Fujita-type exponent for Eq. \eqref{eq2}.
\begin{theorem}\label{th.fujita}
Let $\sigma>-2$ and either $N\in\{1,2\}$ or $N\geq3$ and $p\in(1,p_c(\sigma))$. Then any non-trivial, non-negative radially symmetric solution to Eq. \eqref{eq1} blows up in finite time.
\end{theorem}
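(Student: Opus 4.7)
The plan is to convert the problem, via the transformation of Section \ref{sec.transf}, into a blow-up statement for the generalized Fisher-KPP equation \eqref{Fisher}, and then transfer the conclusion back to $u$. Under the assumptions of the theorem, the coefficient $K_0$ from \eqref{interm2} is strictly negative: when $N\geq 3$ and $1<p<p_c(\sigma)$ this is immediate from the equivalent expression $K_0=\frac{(\sigma+2)(N-2)}{(p-1)^2}(p-p_c(\sigma))$, while when $N\in\{1,2\}$ it follows from $N-2\leq 0$ combined with $(\sigma+2)/(p-1)>0$, which force the bracket in \eqref{interm2} to be negative. Consequently, \eqref{Fisher} reads $\Psi_t=\Psi_{zz}+|K_0|\Psi+\Psi^p$ with both reactive terms pointing in the blow-up direction, and the initial datum $\Psi_0(z)=e^{(\sigma+2)z/(p-1)}u_0(e^z)$ is non-negative and not identically zero by \eqref{transf.ic}.

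The core step is to show that any such $\Psi$ must blow up in finite time, a property recorded in \cite{IS24}; a self-contained proof goes through a Kaplan-type eigenfunction argument. Choose $L>0$ large enough so that the first Dirichlet eigenvalue $\lambda_1=\pi^2/(4L^2)$ of $-\partial_{zz}$ on $(-L,L)$ satisfies $\lambda_1<-K_0$, and translate the interval (using the $z$-translation invariance of \eqref{Fisher}) so that $\Psi_0>0$ on a subset of $[-L,L]$ of positive measure. Let $\phi_1>0$ be the associated eigenfunction, normalized by $\int_{-L}^L\phi_1\,dz=1$, and define $F(t):=\int_{-L}^L\Psi(z,t)\phi_1(z)\,dz$, so that $F(0)>0$. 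Integrating \eqref{Fisher} against $\phi_1$ and performing two integrations by parts produces the boundary terms $-\Psi(L,t)\phi_1'(L)+\Psi(-L,t)\phi_1'(-L)$, which are non-negative because $\Psi\geq 0$, $\phi_1'(L)\leq 0$ and $\phi_1'(-L)\geq 0$; coupling this with Jensen's inequality $\int\Psi^p\phi_1\,dz\geq F^p$ for the probability measure $\phi_1\,dz$ yields
\begin{equation*}
F'(t)\,\geq\,(-\lambda_1-K_0)\,F(t)+F(t)^p\,\geq\,F(t)^p,
\end{equation*}
so $F$, and \emph{a fortiori} $\|\Psi(\cdot,t)\|_\infty$, diverges at some finite time $T^*>0$.

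Finally, undoing the changes of variable \eqref{transf1}--\eqref{transf2} through the identity $u(r,t)=r^{-(\sigma+2)/(p-1)}\Psi(\ln r+Kt,t)$ transfers the blow-up to $u$: the prefactor $r^{-(\sigma+2)/(p-1)}$ is bounded between positive constants on the compact annulus $r\in[e^{-L-KT^*},e^{L-KT^*}]$, so the unboundedness of $\Psi(\cdot,t)$ on $[-L,L]$ forces $\|u(\cdot,t)\|_\infty\to\infty$ as $t\to T^*$. The main obstacle lies in the second paragraph, where one must arrange $\lambda_1+K_0<0$ together with $F(0)>0$ and verify the favorable sign of the boundary terms; all three hinge decisively on $K_0<0$ and on the positivity of $\Psi$, both of which are automatic in the present regime.
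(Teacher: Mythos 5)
Your proposal is correct, but it proves the key blow-up step for \eqref{Fisher} by a different route than the paper. Both arguments begin identically: verify that the hypotheses force $K_0<0$ in \eqref{interm2} (your case distinction between $N\geq3$ with $p<p_c(\sigma)$ and $N\in\{1,2\}$ is exactly the right check), so that \eqref{Fisher} becomes $\Psi_t=\Psi_{zz}+|K_0|\Psi+\Psi^p$, and both finish by undoing \eqref{transf1}--\eqref{transf2}. For the core step, the paper argues in two stages: it first compares $\Psi$ with the solution of the linear equation $\underline{\Psi}_t=\underline{\Psi}_{zz}+|K_0|\underline{\Psi}$ to obtain exponential growth on compact sets, then observes that $\Psi$ is a supersolution of $\Psi_t=\Psi_{zz}+\Psi^p$ whose energy $E[\Psi](t)$ eventually becomes negative, and invokes the negative-energy blow-up criterion \cite[Theorem 17.6]{QS} together with the comparison principle. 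You instead run a Kaplan eigenfunction argument directly on the full equation: choosing $L$ with $\lambda_1=\pi^2/(4L^2)<|K_0|$ absorbs the linear term, the boundary terms have the right sign, and Jensen's inequality gives $F'\geq F^p$ with $F(0)>0$. Your route is more self-contained and elementary (it needs no energy functional, no citation of \cite{QS}, and avoids the paper's somewhat delicate claim that exponential growth makes the energy negative), at the cost of being a one-dimensional bounded-domain localization rather than exploiting the structural supersolution comparisons the paper prefers. Two small points worth making explicit if you write this up: the normalization $\int_{-L}^L\phi_1\,dz=1$ is what licenses Jensen, and the conclusion of the ODE inequality is that $\Psi$ cannot remain a bounded classical solution up to time $F(0)^{1-p}/(p-1)$, which is precisely the finite-time blow-up you then transfer to $u$ on the compact annulus where the prefactor $r^{-(\sigma+2)/(p-1)}$ is bounded away from $0$ and $\infty$.
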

\begin{proof}
Notice that any of the conditions $N\in\{1,2\}$ or $N\geq3$ and $1<p<p_c(\sigma)$ implies $K_0<0$, where $K_0$ is defined in \eqref{interm2}. Thus, \eqref{Fisher} features in this case a sum of reaction terms. It is rather obvious then that any solution to \eqref{Fisher} blows up in finite time, but for the sake of completeness we give some details. Let $\Psi$ be a non-negative, non-trivial solution to \eqref{Fisher}. Then, in particular $\Psi$ is a supersolution to the linear equation
\begin{equation}\label{interm3}
\underline{\Psi}_t=\underline{\Psi}_{zz}+|K_0|\underline{\psi}.
\end{equation}
Let $\underline{\Psi}$ be the solution to \eqref{interm3} with initial condition $\underline{\Psi}(z,0)=\Psi(z,0)$ and introduce
$$
\Phi(z,t)=e^{-|K_0|t}\underline{\Psi}(z,t).
$$
Then $\Phi(z,0)=\underline{\Psi}(z,0)$ and $\Phi$ is a solution to the classical one-dimensional heat equation. It then follows from standard results related to the heat equation that
$$
\underline{\Psi}(z,t)=e^{|K_0|t}\Phi(z,t)
$$
has at least an exponential time growth on compact subsets. By comparison, the same holds true for $\Psi$. But, on the other hand, $\Psi$ is also a supersolution to
\begin{equation}\label{interm4}
\Psi_t=\Psi_{zz}+\Psi^p,
\end{equation}
and, since it has an exponential time growth, one can readily see that the energy
$$
E[\Psi](t)=\frac{1}{2}\int_{\real}|\nabla\Psi(z,t)|^2\,dz-\frac{1}{p+1}\int_{\real}\Psi^{p+1}(z,t)\,dz
$$
becomes negative for some $t_0>0$ sufficiently large. Considering this $t_0$ as initial time for comparison, finite time blow-up of $\Psi$ is then ensured by \cite[Theorem 17.6]{QS} and the comparison principle. We end the proof by undoing the transformations \eqref{transf1}-\eqref{transf2}.
\end{proof}

\section{Transition from decay to blow-up for $p>p_c(\sigma)$}\label{sec.decay}

In the range $p>p_c(\sigma)$, we observe that $K_0>0$ and thus \eqref{Fisher} is a generalized Fisher-KPP type equation, featuring a competition between a (linear) absorption term and a source term. As established in \cite{IS24}, this fact leads to more interesting situations than in the previous section. Throughout this section, we fix $N\geq3$ and $p>p_c(\sigma)$, the case $p=p_c(\sigma)$ being considered in the next section.

\subsection{Decay and large time behavior}\label{subsec.ltb}

Let us first remark that, for $p>p_c(\sigma)$, Eq. \eqref{eq1} admits a stationary solution (with a singularity at $x=0$),
\begin{equation}\label{stat.sol}
S(r):=K_0^{1/(p-1)}r^{-(\sigma+2)/(p-1)}.
\end{equation}
In this section we establish the decay rate as $t\to\infty$ and the large time behavior of solutions whose initial condition lies below the singular stationary solution \eqref{stat.sol}. Furthermore, we establish at least a class of solutions that remain bounded globally in time, completing the characterization of $p_c(\sigma)$ as the Fujita-type exponent for Eq. \eqref{eq1}.
\begin{theorem}\label{th.decay}
Let $N\geq3$, $\sigma>-2$ and $p>p_c(\sigma)$. Let $u_0$ be an initial condition as in \eqref{ic} such that $u_0(r)<S(r)$ for any $r\in(0,\infty)$. Then
\begin{enumerate}
\item There exists $C=C(p,\sigma,u_0)>0$ such that the solution $u$ to Eq. \eqref{eq1} with initial condition $u_0$ satisfies
\begin{equation}\label{decay}
u(r,t)\leq Cr^{-(\sigma+2)/(p-1)}e^{-K_0t}, \quad (r,t)\in(0,\infty)\times(0,\infty).
\end{equation}
\item If, furthermore, the following weighted integral is finite
\begin{equation}\label{int.weight}
M(u_0):=\int_0^{\infty}r^{(\sigma+2)/(p-1)-1}u_0(r)\,dr<\infty,
\end{equation}
then the large time behavior of $u$ as $t\to\infty$ is given by
\begin{equation}\label{asympt.beh}
\lim\limits_{t\to\infty}t^{1/2}\left[e^{K_0t}r^{(\sigma+2)/(p-1)}u(r,t)-G(r+Kt,t)\right]=0,
\end{equation}
with uniform convergence over compact sets in $(0,\infty)$, where $K_0$, $K$ are defined in \eqref{interm2} and
$$
G(\zeta,t)=\frac{M(u_0)}{\sqrt{4\pi t}}e^{-\zeta^2/4t},
$$
is the Gaussian kernel.
\item If, moreover, ${\rm supp}\,u_0\subset\real^N\setminus\{0\}$ is a compact set, then $u(0,t)=0$ for any $t\in(0,\infty)$ and in particular $u(t)\in L^{\infty}(\real^N)$ for any $t>0$.
\end{enumerate}
\end{theorem}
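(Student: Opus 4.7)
The plan is to translate each of the three assertions into a corresponding statement about the solution $\Psi$ of the Fisher-KPP type equation \eqref{Fisher} via the composition of the transformations \eqref{transf1}--\eqref{transf2}, and then invoke the results obtained in \cite{IS24} for that equation. As a preliminary observation, multiplying the hypothesis $u_0(r) < S(r) = K_0^{1/(p-1)} r^{-(\sigma+2)/(p-1)}$ by $r^{(\sigma+2)/(p-1)}$ gives $\Psi_0(z) < K_0^{1/(p-1)}$ for every $z \in \real$, so the initial datum for \eqref{Fisher} lies strictly below the positive constant stationary solution of \eqref{Fisher}. By the comparison principle $\Psi(z,t) < K_0^{1/(p-1)}$ for all $(z,t)$, which immediately yields the useful subsolution inequality $\Psi_t \leq \Psi_{zz}$, since $\Psi^{p-1} \leq K_0$ gives $\Psi^p \leq K_0\Psi$ and the linear absorption term cancels.

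For Part 1, I would invoke the corresponding exponential decay result for \eqref{Fisher} established in \cite{IS24}: under the above hypothesis on $\Psi_0$, one has $\Psi(z,t) \leq C e^{-K_0 t}$ uniformly in $z$. Undoing the transformations via the identity $\Psi(\ln r + Kt, t) = r^{(\sigma+2)/(p-1)} u(r,t)$ gives exactly the pointwise bound \eqref{decay}.

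For Part 2, I would first observe that the weighted mass \eqref{int.weight} coincides with the $L^1$ norm of $\Psi_0$ on $\real$: the substitution $z=\ln r$ yields $dr=e^z\,dz$ and $r^{(\sigma+2)/(p-1)-1} u_0(r)\,dr = \Psi_0(z)\,dz$, so $M(u_0)=\int_\real \Psi_0(z)\,dz$. The results of \cite{IS24} then guarantee that, for initial data lying below the stationary solution and belonging to $L^1(\real)$, the large time behavior of $\Psi$ is governed by the linearization of \eqref{Fisher} around $\Psi=0$, namely the linear equation $\Psi_t=\Psi_{zz}-K_0\Psi$, whose fundamental solution is $e^{-K_0 t}(4\pi t)^{-1/2} e^{-z^2/4t}$. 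Concretely, $t^{1/2}\bigl[e^{K_0 t}\Psi(z,t)-G(z,t)\bigr]\to 0$ uniformly on compact subsets of $\real$, and substituting $z=\ln r + Kt$ produces \eqref{asympt.beh}.

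For Part 3, a compactly supported $u_0$ in $\real^N\setminus\{0\}$ forces $\Psi_0$ to be compactly supported in some $[\ln\delta,\ln R]\subset\real$. Using the subsolution inequality $\Psi_t\leq \Psi_{zz}$ together with the standard Gaussian representation of the heat semigroup, one obtains a bound of the form
\begin{equation*}
\Psi(z,t) \leq \frac{M(u_0)}{\sqrt{4\pi t}} \exp\!\left(-\frac{\dist(z,[\ln\delta,\ln R])^2}{4t}\right).
\end{equation*}
As $r\to 0^+$ (so $z=\ln r+Kt\to-\infty$), the super-exponential Gaussian decay in $|\ln r|^2$ defeats the exponential prefactor $r^{-(\sigma+2)/(p-1)}=e^{-((\sigma+2)/(p-1))\ln r}$, whence $u(r,t)\to 0$; continuity then gives $u(0,t)=0$, and combined with Part 1 this places $u(\cdot,t)$ in $L^\infty(\real^N)$. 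I expect the main technical hurdle to lie in Part 2: a rigorous derivation of the asymptotic profile requires treating $\Psi^p$ as an integrable-in-time perturbation of the linearized semigroup (exploiting the exponential decay from Part 1 to control the nonlinear contribution) and pairing this Duhamel analysis with the $t^{-1/2}$ Gaussian normalization to recover the precise rate of convergence.
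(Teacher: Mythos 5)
Your proposal is correct and follows essentially the same route as the paper: map the problem to the generalized Fisher--KPP equation \eqref{Fisher}, note that $u_0<S$ translates to $\|\Psi_0\|_\infty<K_0^{1/(p-1)}$ and $M(u_0)=\|\Psi_0\|_{L^1(\real)}$, invoke the decay and Gaussian asymptotics from \cite{IS24} for Parts 1 and 2, and for Part 3 compare $\Psi$ with the solution of the pure heat equation so that the Gaussian tail beats the factor $e^{-(\sigma+2)z/(p-1)}$ as $z\to-\infty$. The only cosmetic difference is that you derive $\Psi_t\leq\Psi_{zz}$ (making $\Psi$ a subsolution of the heat equation) while the paper shows the heat-equation solution is a supersolution of \eqref{Fisher}; the two comparisons are equivalent.
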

\begin{proof}
Let $u_0$ be as in the statement. We infer from \eqref{transf.ic} that, by applying the transformation \eqref{transf1}-\eqref{transf2}, the radially symmetric solution to Eq. \eqref{eq1} with initial condition $u_0$ is mapped into the solution to \eqref{Fisher} with an initial condition $\Psi_0$ satisfying
\begin{equation}\label{interm5}
\|\Psi_0\|_{\infty}<K_0^{1/(p-1)}.
\end{equation}
We derive from \eqref{interm5} and \cite[Theorem 1]{IS24} that the solution $\Psi$ to \eqref{Fisher} with initial condition $\Psi_0$ is global, decays to zero as $t\to\infty$ and, more precisely, there is $C>0$ (depending on $\|\Psi_0\|_{\infty}$) such that
$$
\Psi(z,t)\leq Ce^{-K_0t}, \quad (z,t)\in\real\times(0,\infty).
$$
The decay estimate \eqref{decay} follows then from the previous inequality by undoing \eqref{transf1}-\eqref{transf2}. Assume next that \eqref{int.weight} holds true. We infer from \eqref{transf1}-\eqref{transf2} that this condition entails
$$
\int_{\real}\Psi_0(z)\,dz=\int_{\real}w_0(y)\,dy=\int_0^{\infty}\frac{w_0(r)}{r}\,dr=\int_0^{\infty}r^{(\sigma+2)/(p-1)-1}u_0(r)\,dr<\infty,
$$
thus $\Psi_0\in L^1(\real)$ and $M(u_0)=\|\Psi_0\|_1$. Then, \cite[Theorem 1]{IS24} ensures that
\begin{equation}\label{interm6}
\lim\limits_{t\to\infty}t^{1/2}\|e^{K_0t}\Psi(t)-G(t)\|_{\infty}=0,
\end{equation}
with
$$
G(z,t)=\frac{\|\Psi_0\|_{1}}{\sqrt{4\pi t}}e^{-z^2/4t}.
$$
We then deduce the large time behavior \eqref{asympt.beh} (with uniform convergence) from \eqref{interm6} by undoing the transformation \eqref{transf1}-\eqref{transf2}. Finally, assume that the initial condition $u_0$ is compactly supported in a set contained in $\real^N\setminus\{0\}$, or, in terms of radial variables, in a compact interval of $(0,\infty)$. We then deduce from \eqref{transf.ic} that the initial condition $\Psi_0$ to \eqref{Fisher} is compactly supported in $(-\infty,\infty)$. Let $\tilde{\Psi}$ be the solution to the Cauchy problem
\begin{equation}\label{interm9}
\begin{split}
&\tilde{\Psi}_{t}-\tilde{\Psi}_{zz}=0, \quad (z,t)\in\real\times(0,\infty),\\
&\tilde{\Psi}(z,0)=\Psi_0(z), \quad z\in\real.
\end{split}
\end{equation}
Since $\|\Psi_0\|_{\infty}<K_0^{1/(p-1)}$, it readily follows by the comparison principle that
\begin{equation}\label{interm10}
\|\tilde{\Psi}(t)\|_{\infty}<K_0^{1/(p-1)}, \quad t\in(0,\infty).
\end{equation}
Moreover, $\tilde{\Psi}$ is a supersolution to \eqref{Fisher}. Indeed, taking into account \eqref{interm9} and \eqref{interm10}, we compute
$$
\tilde{\Psi}_t(z,t)-\tilde{\Psi}_{zz}(z,t)+K_0\tilde{\Psi}(z,t)-\tilde{\Psi}^p(z,t)=\tilde{\Psi}(z,t)(K_0-\tilde{\Psi}^{p-1}(z,t))>0,
$$
for any $(z,t)\in\real\times(0,\infty)$. It thus follows from the comparison principle applied to \eqref{Fisher} that $\Psi(z,t)\leq\tilde{\Psi}(z,t)$ for any $(z,t)\in\real\times(0,\infty)$. Moreover, by standard results for the heat equation, there is a mapping $t\mapsto z(t)$ and a positive constant $C(t)$ such that we have
$$
\Psi(z,t)\leq\tilde{\Psi}(z,t)\leq C(t)e^{-z^2/4t}, \quad t>0, \quad z\in(-\infty,z(t)),
$$
whence, by undoing the transformations \eqref{transf1}-\eqref{transf2}, we obtain that $u(0,t)=0$ for any $t>0$ and that the solution $u$ remains bounded in a neighborhood of $r=0$. This fact, together with \eqref{decay}, ensures that $u(t)\in L^{\infty}(\real^N)$ for any $t\in(0,\infty)$, completing the proof.
\end{proof}

\medskip 

\noindent \textbf{Discussion.} The previous statement does not take into account the behavior of the function $t\mapsto u(r,t)$ at $r=0$ when we start with data such that $u_0(0)>0$. Indeed, a priori, the result of Theorem \ref{th.decay} stays true even when $u(r,t)$ might blow-up in the sense of developing a vertical asymptote at $r=0$ with a weaker singularity than $r^{-(\sigma+2)/(p-1)}$, while remaining bounded (and decaying in time as in the statement) at any point $x\in\real^N$ with $r=|x|>0$. This property is related, by the transformation \eqref{transf1}-\eqref{transf2}, with the question of whether, if $\Psi$ is a solution to \eqref{Fisher} such that
\begin{equation}\label{interm8}
\Psi_0(z)\sim Ce^{(\sigma+2)z/(p-1)}, \quad {\rm as} \ z\to-\infty,
\end{equation}
then its tail is preserved at later times, that is,  
$$
\Psi(z,t)\leq\overline{C}e^{(\sigma+2)z/(p-1)}, \quad z\in(-\infty,z(t)), \quad t>0.
$$
Up to our knowledge, there is no reference proving such a result for the precise decay \eqref{interm8}, but it is established in \cite{He99} that tails of the form $\Psi(z)\sim A|z|^{-\alpha}$ as $|z|\to\infty$ are preserved during the evolution for solutions to the reaction-diffusion equation
$$
\Psi_t=\Psi_{zz}+\Psi^p,
$$
and by comparison, the upper bound for the tail remains in force for \eqref{Fisher}. It is very likely (as suggested by the author of \cite{He99}, who claims that the techniques therein apply for more general decays) that the same property holds true for initial conditions decaying with a precise exponential tail as $z\to-\infty$ (such as, for example, \eqref{interm8}). However, due to the very technical and lengthy character of the proofs in \cite{He99}, we refrain from developing this subject in the present note.

\subsection{A new explicit solution as a separatrix between decay and blow-up}\label{subsec.sep}

In this section, we introduce a new explicit radially symmetric solution to Eq. \eqref{eq1} and show that it is a separatrix between decay and blow-up, in some conditions. As we shall see, the Sobolev critical exponent $p_s(\sigma)$ defined in \eqref{crit.exp} plays a significant role. We start from the following stationary solution to \eqref{Fisher} obtained in \cite[Section 2.1]{IS24}
\begin{equation}\label{stat.fisher}
\overline{\Psi}(z)=\left[\frac{K_0(p+1)}{2}\right]^{1/(p-1)}\left[1-\tanh^2\left(\frac{(p-1)\sqrt{K_0}}{2}z\right)\right]^{1/(p-1)},
\end{equation}
which, by undoing first the transformation \eqref{transf2} leads to
\begin{equation*}
W(y,t)=\left[\frac{K_0(p+1)}{2}\right]^{1/(p-1)}\left[1-\tanh^2\left(\frac{(p-1)\sqrt{K_0}}{2}(y+Kt)\right)\right]^{1/(p-1)}
\end{equation*}
and finally, by undoing also \eqref{transf1}, gives the following explicit radially symmetric solution to Eq. \eqref{eq1}
\begin{equation}\label{expl.sol}
U(r,t)=\left[\frac{K_0(p+1)}{2}r^{-(\sigma+2)}\right]^{1/(p-1)}\left[1-\tanh^2\left(\frac{(p-1)\sqrt{K_0}}{2}(\ln\,r+Kt)\right)\right]^{1/(p-1)}.
\end{equation}
Observe that $U$ is an \emph{eternal solution} to Eq. \eqref{eq1}, in the sense that it is well defined for $t\in(-\infty,\infty)$. By employing the identity
$$
1-\tanh^2(\theta)=\frac{4}{2+e^{2\theta}+e^{-2\theta}}, \quad \theta=\frac{(p-1)\sqrt{K_0}}{2}(\ln\,r+Kt)
$$
we obtain from \eqref{expl.sol} that, for any $t>0$,
\begin{equation}\label{beh.stat}
\begin{split}
&U(r,t)\sim[2K_0(p+1)]^{1/(p-1)}r^{\sqrt{K_0}-(\sigma+2)/(p-1)}e^{\sqrt{K_0}Kt}, \quad {\rm as} \ r\to0,\\
&U(r,t)\sim[2K_0(p+1)]^{1/(p-1)}r^{-\sqrt{K_0}-(\sigma+2)/(p-1)}e^{-\sqrt{K_0}Kt}, \quad {\rm as} \ r\to\infty,
\end{split}
\end{equation}
where, as usual, the equivalence symbol $\sim$ means that the quotient between the two sides tends to one in the specified limit, and we recall that $K$ and $K_0$ are defined in \eqref{interm2}. We observe that $U$ always decays as $r\to\infty$, but the local behavior at $r=0$ and the variation with respect to time are more interesting. Indeed, we notice on the one hand that
$$
K_0-\left(\frac{\sigma+2}{p-1}\right)^2=\frac{(\sigma+2)(N-2)}{(p-1)^2}[p-p_s(\sigma)],
$$
thus $p=p_s(\sigma)$ is critical with respect to the behavior at $r=0$. On the other hand, studying the variation with respect to time of the expression
$$
h(t):=\frac{4}{2+e^{(p-1)\sqrt{K_0}(\ln\,r+Kt)}+e^{-(p-1)\sqrt{K_0}(\ln\,r+Kt)}},
$$
we observe that it behaves in different ways in dynamic inner sets $\{0<r<e^{-Kt}\}$ and in dynamic outer sets $\{r>e^{-Kt}\}$, for $t>0$. Gathering the previous analysis, we have:

$\bullet$ if $p\in(p_c(\sigma),p_s(\sigma))$, then $U$ is a singular solution, presenting a vertical asymptote at $r=0$. Moreover, according to \eqref{beh.stat} and the negativity of $K$ in this range, the solution $U$ decays with respect to time in the inner set $\mathcal{I}$ and increases with respect to time in the outer set $\mathcal{O}$ defined below:
\begin{equation}\label{inner.outer}
\mathcal{I}:=\{(r,t)\in(0,\infty)^2: 0<r<e^{-Kt}\}, \quad \mathcal{O}:=\{(r,t)\in(0,\infty)^2: e^{-Kt}<r\}.
\end{equation}

$\bullet$ if $p=p_s(\sigma)$, then we also observe that $K=0$ and thus $U$ is a stationary solution to Eq. \eqref{eq1} with $U(0,t)=U(0)=[2K_0(p+1)]^{1/(p-1)}$.

$\bullet$ if $p>p_s(\sigma)$, then we have $U(0,t)=0$ for any $t>0$. Moreover, according to \eqref{beh.stat} and the positivity of $K$ in this range, the solution $U$ increases with respect to time in the inner set $\mathcal{I}$ and decreases with respect to time in the outer set $\mathcal{O}$ introduced in \eqref{inner.outer}.

In order to better illustrate the previous behavior in inner and outer sets, for the readers' convenience we plot the solution $U$ at different times in Figure \ref{fig1}.

\begin{figure}[ht!]
  \begin{center}
  \subfigure[$p<p_s(\sigma)$]{\includegraphics[width=7.5cm,height=6cm]{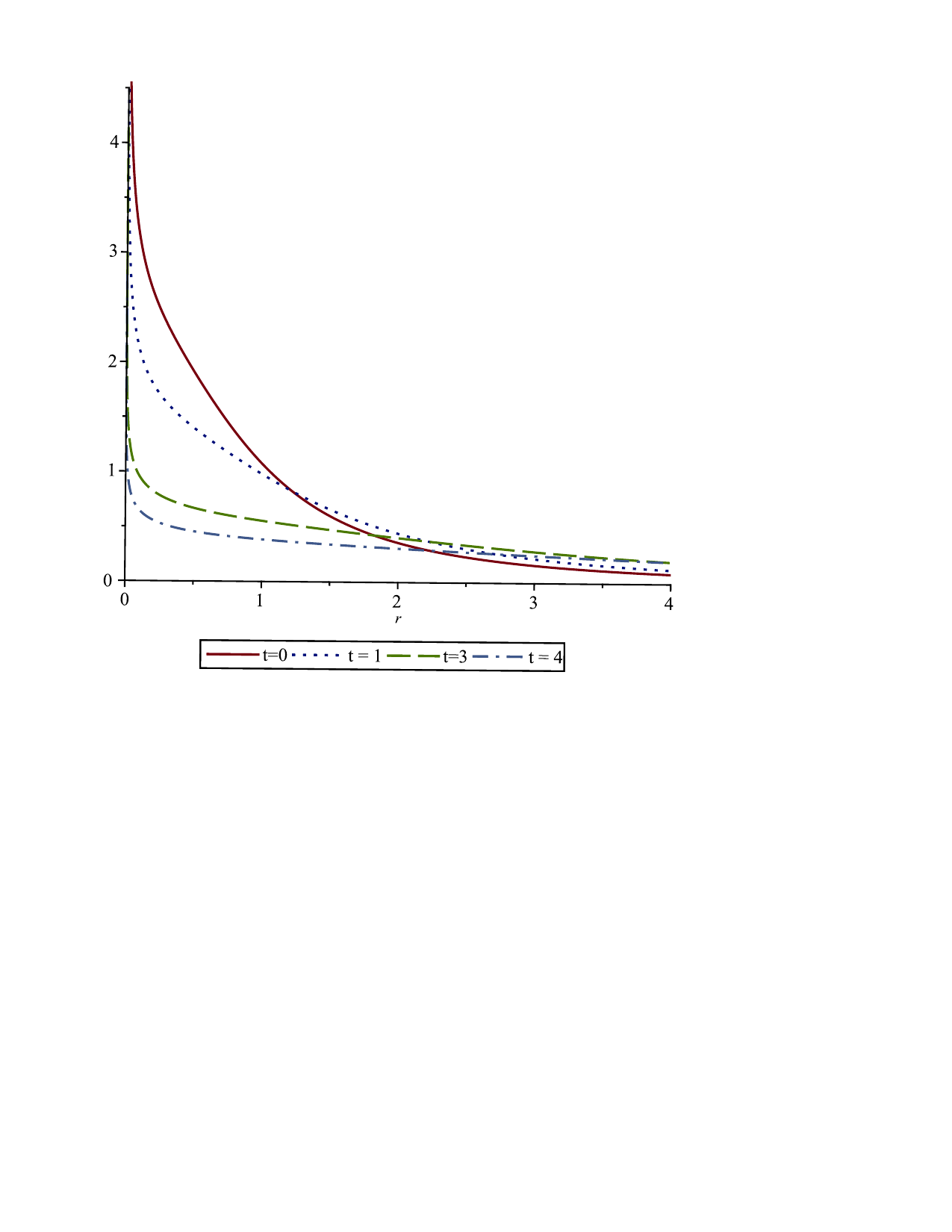}}
  \subfigure[$p>p_s(\sigma)$]{\includegraphics[width=7.5cm,height=6cm]{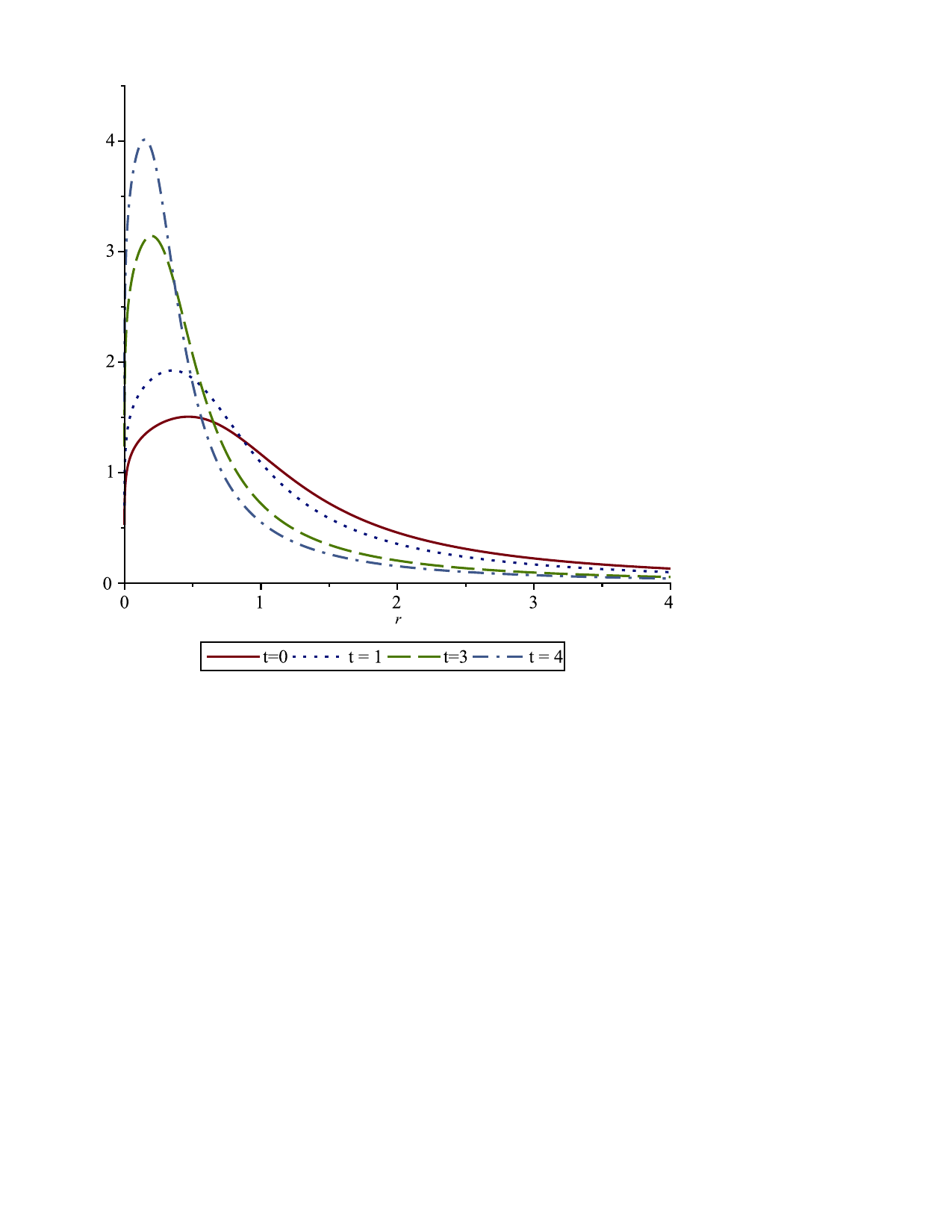}}
  \end{center}
  \caption{The solution $U$ plotted at different times. Experiments for $\sigma=1$, $N=4$, $p=3.5$, respectively $p=4.5$, where $p_s(\sigma)=4$.}\label{fig1}
\end{figure}

\medskip

The explicit solution $U$ plays the role of a separatrix between finite time blow-up and time decay. This is made precise in the following statement, where, as usual, $U_0(r)=U(r,0)$, $r\in[0,\infty)$.
\begin{theorem}\label{th.sep}
Let $N\geq3$, $p>p_c(\sigma)$ and $u_0$ be as in \eqref{ic}.

(a) If
\begin{equation}\label{inf.upper}
\inf\limits_{r\in(0,\infty)}\frac{u_0(r)}{U_0(r)}=\kappa_0>1,
\end{equation}
then the radially symmetric solution $u$ to Eq. \eqref{eq1} with initial condition $u_0$ blows up in finite time.

(b) If
\begin{equation}\label{sup.lower}
\sup\limits_{r\in(0,\infty)}\frac{u_0(r)}{U_0(r)}=\kappa^0<1,
\end{equation}
then the radially symmetric solution $u$ to Eq. \eqref{eq1} with initial condition $u_0$ decays to zero as $t\to\infty$ on compact subsets in $\real^N\setminus\{0\}$ and behaves as indicated in Theorem \ref{th.decay}.
\end{theorem}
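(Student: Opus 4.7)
The plan is to reduce both parts to the generalized Fisher--KPP equation \eqref{Fisher} via the transformation \eqref{transf1}-\eqref{transf2}. The essential observation is that $U$ corresponds exactly to the stationary profile $\overline{\Psi}$ of \eqref{stat.fisher}: applying \eqref{transf1} to $U(r,t)$ yields $W(y,t)=\overline{\Psi}(y+Kt)$, which \eqref{transf2} then carries to the $t$-independent function $\overline{\Psi}(z)$. Consequently, at $t=0$ the quotient $u_0(r)/U_0(r)$ translates to $\Psi_0(z)/\overline{\Psi}(z)$, so that \eqref{inf.upper} reads $\Psi_0\geq\kappa_0\overline{\Psi}$ and \eqref{sup.lower} reads $\Psi_0\leq\kappa^0\overline{\Psi}$ pointwise on $\real$. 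Using the stationary equation $\overline{\Psi}''-K_0\overline{\Psi}+\overline{\Psi}^p=0$, a short computation gives
\[
\partial_t(\kappa\overline{\Psi})-(\kappa\overline{\Psi})_{zz}+K_0(\kappa\overline{\Psi})-(\kappa\overline{\Psi})^p=(\kappa-\kappa^p)\,\overline{\Psi}^p,
\]
so $\kappa\overline{\Psi}$ is a strict supersolution of \eqref{Fisher} when $\kappa<1$ and a strict subsolution when $\kappa>1$.

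For part (b), I would compare $\Psi$ with the solution $\widetilde\Psi$ of \eqref{Fisher} starting from the strict supersolution $\kappa^0\overline{\Psi}$: by the parabolic comparison principle $\Psi\leq\widetilde\Psi$, and $\widetilde\Psi(z,t)$ is pointwise monotone decreasing in $t$. Combining monotonicity with standard parabolic regularity, $\widetilde\Psi(\cdot,t)$ converges locally in $C^2$ to a non-negative bounded stationary solution $\Phi$ of \eqref{Fisher} with $\Phi\leq\kappa^0\overline{\Psi}$. The non-negative bounded stationary profiles of $\Phi''=K_0\Phi-\Phi^p$ are classified by the first integral $\tfrac12(\Phi')^2=\tfrac{K_0}{2}\Phi^2-\tfrac{1}{p+1}\Phi^{p+1}+C$: they consist of $0$, the constant $K_0^{1/(p-1)}$, translates of the homoclinic $\overline{\Psi}$, and periodic orbits around $K_0^{1/(p-1)}$. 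Since $\overline{\Psi}$ decays at infinity, the bound $\Phi\leq\kappa^0\overline{\Psi}$ rules out all options except $\Phi\equiv 0$ (for each competing profile one argues at a translate of the point where $\overline{\Psi}$ attains its maximum). Hence $\widetilde\Psi(\cdot,t)\to 0$ locally uniformly, so some $t_0>0$ satisfies $\|\Psi(\cdot,t_0)\|_\infty<K_0^{1/(p-1)}$; restarting at $t_0$ and invoking Theorem \ref{th.decay} (through Theorem~1 of \cite{IS24}) yields the decay estimate \eqref{decay} and, whenever \eqref{int.weight} holds, the large-time profile \eqref{asympt.beh}. Undoing \eqref{transf1}-\eqref{transf2} delivers the statement for $u$.

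For part (a), the symmetric comparison gives $\Psi\geq\widehat\Psi$, where $\widehat\Psi$ solves \eqref{Fisher} starting from the strict subsolution $\kappa_0\overline{\Psi}$, and is therefore pointwise increasing in $t$. If $\widehat\Psi$ were globally defined and locally bounded, its monotone limit would be a bounded stationary solution $\Phi\geq\kappa_0\overline{\Psi}$; but then $\sup\Phi\geq\kappa_0\,[K_0(p+1)/2]^{1/(p-1)}>\sup\overline{\Psi}$, which the phase-plane analysis above rules out since $\overline{\Psi}$ is the outermost bounded non-negative orbit. Consequently $\|\widehat\Psi(\cdot,t)\|_\infty\to\infty$, and by the dichotomy for \eqref{Fisher} established in \cite{IS24} this forces finite-time blow-up of $\widehat\Psi$, hence of $\Psi$ by comparison; undoing \eqref{transf1}-\eqref{transf2} produces finite-time blow-up of $u$. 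The step I expect to be the main obstacle is the precise phase-plane exclusion of competing stationary limits in both parts, where one must use translation invariance of \eqref{Fisher} together with the tail decay of $\overline{\Psi}$ to simultaneously discard the constant $K_0^{1/(p-1)}$, the periodic orbits around it, and translates of $\overline{\Psi}$ itself.
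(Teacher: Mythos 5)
Your reduction is exactly the paper's: transform via \eqref{transf1}--\eqref{transf2}, observe that $U$ is the pullback of the stationary profile $\overline{\Psi}$ of \eqref{stat.fisher}, and translate \eqref{inf.upper}, \eqref{sup.lower} into $\Psi_0\geq\kappa_0\overline{\Psi}$, respectively $\Psi_0\leq\kappa^0\overline{\Psi}$. Where you diverge is that the paper stops there and simply invokes \cite[Theorem 3.2]{IS24}, which asserts the separatrix property of $\overline{\Psi}$ for \eqref{Fisher}; you instead re-derive that separatrix property by hand, via the strict sub/supersolution computation $(\kappa-\kappa^p)\overline{\Psi}^p$, monotone-in-time dynamics, and a phase-plane classification of the non-negative bounded stationary states of $\Phi''=K_0\Phi-\Phi^p$. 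That classification and the exclusion argument (no bounded non-negative orbit exceeds $\sup\overline{\Psi}=[K_0(p+1)/2]^{1/(p-1)}$ anywhere; anything dominated by $\kappa^0\overline{\Psi}$ with $\kappa^0<1$ must vanish) are correct and make the proof essentially self-contained, which is a genuine gain in transparency over the paper's one-line citation.

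Two steps still need tightening. In part (b), local uniform convergence $\widetilde\Psi(\cdot,t)\to0$ on compacts does not by itself give $\|\Psi(\cdot,t_0)\|_\infty<K_0^{1/(p-1)}$; you must combine it with the global envelope $\widetilde\Psi(z,t)\leq\kappa^0\overline{\Psi}(z)$, whose decay as $|z|\to\infty$ handles the complement of a large compact set (note $\kappa^0\sup\overline{\Psi}$ alone is \emph{not} below $K_0^{1/(p-1)}$, since $\sup\overline{\Psi}>K_0^{1/(p-1)}$). This is easily repaired but should be said. The more substantive gap is in part (a): ruling out a bounded stationary limit only shows that $\widehat\Psi$ cannot remain globally bounded; to exclude infinite-time grow-up you appeal to an unspecified ``dichotomy'' in \cite{IS24}, which at that point is doing real work that you have not supplied. (The same caveat applies to upgrading pointwise monotone limits to locally bounded stationary solutions.) A self-contained fix is available along the lines the paper already uses in Theorem \ref{th.fujita}: once $\widehat\Psi(\cdot,t_1)$ exceeds, say, $K_0^{1/(p-1)}+\delta$ on a sufficiently large interval, a Kaplan eigenfunction argument or the energy criterion of \cite[Theorem 17.6]{QS} applied to \eqref{Fisher} yields blow-up in finite time. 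Without one of these (or the explicit statement of \cite[Theorem 3.2]{IS24}), part (a) of your argument concludes only ``unboundedness,'' not the claimed finite-time blow-up.
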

\begin{proof}
Let $\Psi$ be the solution to \eqref{Fisher} obtained from $u$ through the transformation \eqref{transf1}-\eqref{transf2} and $\Psi_0(z)=\Psi(z,0)$ its initial condition. We deduce from \eqref{transf.ic} that the conditions \eqref{inf.upper}, respectively \eqref{sup.lower}, become
$$
\inf\limits_{z\in\real}\frac{\Psi_0(z)}{\overline{\Psi}(z)}=\kappa_0>1, \quad {\rm respectively}, \quad \sup\limits_{z\in\real}\frac{\Psi_0(z)}{\overline{\Psi}(z)}=\kappa^0<1,
$$
where we recall that $\overline{\Psi}$ is the explicit stationary solution to \eqref{Fisher} introduced in \eqref{stat.fisher}. An application of \cite[Theorem 3.2]{IS24}, which states that $\overline{\Psi}$ is a separatrix between finite time blow-up on the one hand and global existence and decay as $t\to\infty$ on the other hand, completes the proof.
\end{proof}

\noindent \textbf{Remark.} Because of the singularity of $U$ at $r=0$ and the boundedness assumed for $u_0$ in \eqref{ic}, Part (a) in Theorem \ref{th.sep} cannot apply in the range $p\in(p_c(\sigma),p_s(\sigma))$.

\section{The special cases $p=p_c(\sigma)$ and $\sigma=-2$}\label{subsec.pc}

These two critical cases are connected by the common fact that, either if $\sigma>-2$ and $p=p_c(\sigma)$ or if $\sigma=-2$ and $p>1$ arbitrary, we have $K_0=0$ and thus \eqref{Fisher} becomes the standard reaction-diffusion equation 
\begin{equation}\label{eq.rd}
\Psi_t=\Psi_{zz}+\Psi^{p},
\end{equation} 
which is well understood nowadays (see, for example, the monograph \cite{QS} and references therein). The first case of this section, dealing with the exponent $p=p_c(\sigma)$, is simpler and is to be considered more as a discussion, while we extend more the analysis of the second case, $\sigma=-2$.

\bigskip

\noindent \textbf{Case 1:} $\mathbf{p=p_c(\sigma)}$. Observe that, in this case, an initial condition such that $u_0(r)>0$ for $r\in(0,\delta)$ is mapped by \eqref{transf.ic} into an initial condition $\Psi_0$ for \eqref{eq.rd} with an exponential tail as $z\to-\infty$. Moreover, recalling the Fujita exponent $p_F=3$ of \eqref{eq.rd}, we observe that
$$
p_c(\sigma)-p_F=\frac{N+\sigma}{N-2}-3=\frac{\sigma-2(N-3)}{N-2}.
$$
We thus infer from the theory in \cite[Chapter 18]{QS} that, if $\sigma\leq2(N-3)$, then $p_c(\sigma)\leq p_F$ and thus any non-trivial and non-negative solution to \eqref{eq.rd} blows-up in finite time, a property that is inherited by any non-trivial radially symmetric solution to Eq. \eqref{eq1} through the transformation \eqref{transf1}-\eqref{transf2}. On the contrary, if $\sigma>2(N-3)$, then $p_c(\sigma)>p_F$ and thus there are both solutions that blow-up in finite time and that exist globally. For example, \cite[Theorem 20.1]{QS} ensures that small initial conditions $\Psi_0$ will produce global solutions to \eqref{eq.rd} remaining below a Gaussian profile, whose decay rate as $z\to-\infty$ is faster than the one given by \eqref{interm8} and by undoing the transformation \eqref{transf1}-\eqref{transf2} we obtain thus global solutions to Eq. \eqref{eq1}. Moreover, the other results in \cite[Chapter 20]{QS} can be readily translated into properties of radially symmetric solutions to Eq. \eqref{eq1}.

\medskip

\noindent \textbf{Remark.} Let us stress here that, contrary to the standard reaction-diffusion equations, the Fujita-type exponent $p=p_c(\sigma)$ for Eq. \eqref{eq1} is not always included in the range where any solution blows up in finite time, as explained above.

\bigskip 

\noindent \textbf{Case 2:} $\mathbf{\sigma=-2}$ \textbf{and} $\mathbf{p>1}$. We observe that, in this case, \eqref{transf1} reduces to setting $r=e^y$, $y\in\real$. Then, according to the behavior of the initial condition at $r=0$, we have the following result, slightly reminding of the properties of the nonhomogeneous heat equation in \cite{IS13} (where $B(u_0)$ designs the blow-up set of the solution with initial condition $u_0$):
\begin{theorem}\label{th.limit}
Let $u_0$ be as in \eqref{ic}. 

(a) If $u_0(0)=A>0$, then the solution $u$ to Eq. \eqref{eq1} with initial condition $u_0$ blows up in finite time. Moreover, if $u_0$ is decreasing with respect to $r$, then $u$ blows up \emph{only} at $r=0$, that is, $B(u_0)=\{0\}$.

(b) Let $q_c=(p-1)/2$ and pick $q\geq\min\{1,q_c\}$. Then, if $u_0$ satisfies
\begin{equation}\label{interm7}
\int_{0}^{\infty}\frac{u_0^q(r)}{r}\,dr<\infty
\end{equation}
for some $q\geq q_c$, there exists a unique, classical, radially symmetric solution $u$ to Eq. \eqref{eq1} defined on a maximal time $t\in(0,T(u_0))$ (where $T(u_0)\in(0,\infty]$) which also satisfies
\begin{equation}\label{interm7bis}
\int_{0}^{\infty}\frac{u^q(r,t)}{r}\,dr<\infty, \quad t\in(0,T(u_0)).
\end{equation}
Moreover, if $u_0$ satisfies \eqref{interm7} and is non-decreasing for $r\in(0,1)$, then $u(0,t)=0$ for any $t\in(0,T(u_0))$.
\end{theorem}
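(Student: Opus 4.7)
The plan is to reduce all claims to the standard one-dimensional reaction-diffusion equation \eqref{eq.rd} via the transformation \eqref{transf1}-\eqref{transf2}, which simplifies dramatically for $\sigma=-2$: we have $K_0=0$, $K=N-2$, and \eqref{transf1} becomes $w(y,t)=u(e^y,t)$, so that $\Psi(z,t)=u(e^{z-Kt},t)$ solves $\Psi_t=\Psi_{zz}+\Psi^p$ on $\real\times(0,T)$ with initial datum $\Psi_0(z)=u_0(e^z)$. Every assertion in the theorem then becomes a statement about $\Psi$, which we handle by classical reaction-diffusion theory, and we undo the transformation at the end.

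For part (a), the hypothesis $u_0(0)=A>0$ gives $\Psi_0(z)\to A$ as $z\to-\infty$, hence $\Psi_0\geq A/2$ on some half-line $(-\infty,z_0)$. Heat-kernel smoothing provides a positive uniform lower bound $c=c(t_1,L)>0$ for $\Psi(\cdot,t_1)$ on $(-L,L)$, and Kaplan's eigenfunction method closes the argument: with $\phi_1\geq 0$ the first Dirichlet eigenfunction of $-\partial_{zz}$ on $(-L,L)$ normalised by $\int\phi_1=1$ and eigenvalue $\lambda_1=\pi^2/(4L^2)$, Jensen's inequality gives $m'(t)\geq-\lambda_1 m(t)+m(t)^p$ for $m(t)=\int_{-L}^L \Psi\phi_1\,dz$, which forces blow-up in finite time once $t_1$ and $L$ are chosen so that $m(t_1)^{p-1}>\lambda_1$. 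Undoing the transformation yields finite-time blow-up of $u$. For the single-point claim, the monotonicity of $u_0$ makes $\Psi_0$ non-increasing in $z$, a property preserved under \eqref{eq.rd} via the maximum principle on $\Psi_z$; comparison with the spatially constant supersolution $v(t)=A(1-(p-1)A^{p-1}t)^{-1/(p-1)}$ pins the blow-up time of $\Psi$ to $T^*=1/((p-1)A^{p-1})$. A Friedman-McLeod-type argument on an auxiliary function of the form $J=\Psi_z+G(\Psi)$ then shows $\Psi(z_0,T^*)<\infty$ for each finite $z_0$, so that translating back gives $B(u_0)=\{0\}$.

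For part (b), the substitution $z=\ln r+Kt$ turns \eqref{interm7} into $\Psi_0\in L^q(\real)$ with $\|\Psi_0\|_q^q=\int_0^\infty u_0^q(r)/r\,dr$, and \eqref{interm7bis} becomes the $L^q$-persistence of $\Psi(\cdot,t)$. Since $q_c=(p-1)/2$ is precisely the one-dimensional Weissler exponent for \eqref{eq.rd}, local well-posedness, uniqueness, classical regularity on $(0,T_\Psi)$ and $L^q$-persistence on the lifespan all follow from \cite[Chapter 15]{QS}; undoing \eqref{transf1}-\eqref{transf2} transfers the statement to $u$. For the final claim $u(0,t)=0$, the monotonicity of $u_0$ on $(0,1)$ combined with \eqref{interm7} forces $u_0(0)=0$ (otherwise $u_0\geq u_0(0)>0$ on $(0,1)$ would make $\int_0^1 u_0^q/r\,dr$ diverge), hence $\Psi_0(z)\to 0$ as $z\to-\infty$. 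Since $\Psi(\cdot,t)\in L^\infty(\real)$ for $t>0$ by smoothing, the Duhamel representation combined with the fact that convolution against the heat kernel preserves vanishing at $-\infty$ yields, via Gronwall applied to $f(t)=\limsup_{z\to-\infty}\Psi(z,t)$, the identity $f\equiv 0$ on $(0,T(u_0))$.

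The hardest step is the single-point blow-up assertion in part (a): ruling out blow-up of $\Psi$ at any finite $z$. The naive Friedman-McLeod ansatz $J=\Psi_z+c\Psi^\alpha$ is delicate because $\Psi_z\to 0$ as $z\to-\infty$ prevents sign-control of $J$ at the initial time unless $c=0$, so the barrier must be refined, e.g.\ by exploiting the strict inequality $\Psi_0(z_0)<A$ at any finite $z_0$ together with the correspondingly slower ODE blow-up time $1/((p-1)\Psi_0(z_0)^{p-1})>T^*$ to bootstrap uniform boundedness of $\Psi(z_0,\cdot)$ on $[0,T^*]$.
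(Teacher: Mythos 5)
Your overall strategy coincides with the paper's: for $\sigma=-2$ the transformation collapses to $\Psi(z,t)=u(e^{z-Kt},t)$ and everything is read off from the standard equation $\Psi_t=\Psi_{zz}+\Psi^p$. Part (b) is handled essentially as in the paper (local $L^q\cap L^\infty$ well-posedness from \cite[Section 15]{QS} with $q_c=(p-1)/2$ the one-dimensional critical exponent, plus persistence of the vanishing of $\Psi$ at $z=-\infty$; your Duhamel/Gronwall argument for the latter is an acceptable, if anything slightly more explicit, substitute for the paper's appeal to the monotonicity argument of part (a)). The blow-up claim in part (a) is also fine: you prove it self-containedly by Kaplan's eigenfunction method, where the paper simply cites Lee--Ni \cite{LN92}; both work since $\liminf_{z\to-\infty}\Psi_0(z)=A>0$.

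The genuine gap is in the single-point blow-up assertion of part (a). After establishing that $\Psi(\cdot,t)$ is non-increasing in $z$, you still must show that $\Psi$ remains bounded at every finite $z_0$ up to the blow-up time, i.e.\ that blow-up occurs \emph{only at space infinity}. Your proposed fixes do not achieve this. The Friedman--McLeod device $J=\Psi_z+c\Psi^\alpha$ fails here for exactly the reason you note ($\Psi_z\to0$ as $z\to-\infty$ forces $c=0$), and the fallback bootstrap --- comparing $\Psi(z_0,\cdot)$ with the ODE solution started from $\Psi_0(z_0)<A$, which blows up later than $T^*$ --- is not a valid comparison: the value at a fixed point $z_0$ is not dominated by that ODE solution, because $\Psi_{zz}(z_0,t)$ need not be nonpositive (for a decreasing profile approaching the constant $A$ at $-\infty$ the profile is typically convex near $z_0$, so heat flows in from the left and $\Psi(z_0,t)$ can exceed the ODE trajectory). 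The statement you need --- if the supremum of the initial data is attained only at $-\infty$ and the data is monotone, then the blow-up set is empty in $\real$ (blow-up only ``at $-\infty$'') --- is a nontrivial theorem; the paper closes this step by invoking \cite[Theorem 1]{Shimojo08} (see also Giga--Umeda on blow-up at space infinity), and without citing or reproving such a result your argument for $B(u_0)=\{0\}$ is incomplete.
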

\begin{proof}
(a) By applying \eqref{transf.ic}, the initial condition $u_0$ is mapped into an initial condition $\Psi_0$ to \eqref{eq.rd} such that $\lim\limits_{z\to-\infty}\Psi_0(z)=A>0$. It follows then from the classical work \cite{LN92} that the solution $\Psi$ to \eqref{eq.rd} with initial condition $\Psi_0$ blows up in a finite time $T\in(0,\infty)$. If, furthermore, $u_0$ is decreasing with respect to $r$, then also $\Psi_0$ will be decreasing with respect to $z\in\real$. We then readily deduce that the function $z\mapsto\Psi(z,t)$ is decreasing with respect to $z$ for any $t\in(0,T)$. Indeed, letting $w=\Psi_z$, we differentiate with respect to $z$ in \eqref{eq.rd} and derive the equation satisfied by $w$, that is,
\begin{equation*}
w_t=w_{zz}+p\Psi^{p-1}w,
\end{equation*}
a linear equation solved by $w=0$. By comparison, since $\Psi_{0,z}\leq0$, it follows that $\Psi_z(z,t)\leq0$ for any $z\in\real$, $t\in(0,T)$. We then infer from \cite[Theorem 1]{Shimojo08} that $\Psi$ blows up only at $-\infty$, whence, by undoing the transformation \eqref{transf1}-\eqref{transf2}, we conclude that $u$ blows up only at $r=0$ as $t\to T$, as claimed.

\medskip 

(b) By applying the transformation \eqref{transf.ic} with $\sigma=-2$, the initial condition $u_0$ is mapped into an initial condition $\Psi_0$ to \eqref{eq.rd}, while the condition \eqref{interm7} becomes
$$
\int_{0}^{\infty}\frac{u_0^q(r)}{r}\,dr=\int_{-\infty}^{\infty}\Psi_0^q(z)\,dz<\infty,
$$
thus $\Psi_0\in L^{\infty}(\real)\cap L^q(\real)$. Standard well-posedness results for \eqref{eq.rd} (see for example \cite[Section 15]{QS}) ensure that there is a classical solution $\Psi$ to \eqref{eq.rd}, defined on a maximal existence time $T(\Psi_0)=T(u_0)\in(0,\infty]$, such that $t\mapsto\Psi(t)$ belongs to $L^{\infty}(\real)\cap L^q(\real)$ for any $t\in(0,T(u_0))$. By undoing the transformations \eqref{transf2} and $r=e^y$, we deduce the existence of a radially symmetric solution $u$ to Eq. \eqref{eq1} defined on $(0,T(u_0))$ and satisfying \eqref{interm7bis}, as claimed. Moreover, if $u_0$ is radially non-decreasing in $(0,1)$, then the initial condition $\Psi_0$ to \eqref{eq.rd} obtained by the transformation \eqref{transf.ic} belongs to $L^q(\real)$ and is non-decreasing on $(-\infty,0)$. It then follows that $\Psi_0(z)\to0$ as $z\to-\infty$ and, by similar arguments as at the end of the proof of Part (a) above, the same property stays true for $t\in(0,T(u_0))$. By undoing the transformation \eqref{transf1}-\eqref{transf2}, we find that $u(0,t)=0$ for any $t\in(0,T)$, and the proof is complete.
\end{proof}

\textbf{Remarks.} (i) In particular, a very important aspect contained in Theorem \ref{th.limit} is the \emph{existence of non-trivial positive solutions} to Eq. \eqref{eq1} with $\sigma=-2$, a fact which is in stark contrast with the full non-existence of any non-trivial and non-negative solution to the equation
$$
\partial_tu=\Delta u+|x|^{-2}u^p, \quad p>1,  
$$
obtained in \cite[Theorem 4.7]{ACP04} (with $\gamma=0$ in the notation therein). It thus appears that the presence of the critical density function $|x|^{-2}$ in front of $u_t$ is the decisive feature that guarantees the existence of positive solutions.  

(ii) The outcome of Part (b) in Theorem \ref{th.limit} is also in strong contrast with the evolution of the standard heat equation starting from similar initial conditions $u_0$, for which the solutions become positive at every point immediately. This difference is another effect of the singular density $|x|^{-2}$ at $x=0$, a fact also noticed in \cite{IS13} in absence of a reaction term. As a particular case of part (b), we also infer that, if there is $\delta>0$ such that $u_0(r)=0$ for $r\in(0,\delta)$, then $u(0,t)=0$ for any $t\in(0,T(u_0))$, similarly to what we proved for $\sigma>-2$ and $p>p_c(\sigma)$ in Theorem \ref{th.decay}.

(iii) The case $\sigma=-2$ is of special interest in some models in the theory of combustion in \cite{KKMS80}, but with a porous medium type diffusion instead of the heat equation. Moreover, this case has not been included in the study performed in \cite{IS23}.

\section{Conclusion}

In this paper we have studied a number of properties of solutions to Eq. \eqref{eq1}, which presents as main feature a critical density function $|x|^{-2}$ pondering the time derivative. On the one hand, we have shown that the critical exponent $p_c(\sigma)$ plays the role of a Fujita-type exponent, limiting between the range of blow-up for any non-trivial solution and possible existence of global solutions to the equation. On the other hand, in the range $p>p_c(\sigma)$ we have introduced a new explicit solution which serves as a separatrix between solutions presenting finite time blow-up and solutions decaying to zero as $t\to\infty$. Moreover, we provide the large time behavior of such solutions, towards a profile related to the Gaussian kernel. Finally, we show local existence of solutions for $\sigma=-2$, in contrast with the equation with a singular potential alone which is known for non-existence of any solution. The fundamental tool employed in all this proofs is a new transformation mapping solutions to Eq. \eqref{eq1} into solutions to a generalized Fisher-KPP equation which is also itself of interest. We believe that our work contributes to improve the understanding of equations involving singular potentials and densities.

\bigskip

\noindent \textbf{Acknowledgements} R. G. I. and A. S. are partially supported by the Project PID2020-115273GB-I00 and by the Grant RED2022-134301-T funded by MCIN/AEI/10.13039/ \\ 501100011033 (Spain).

\bigskip

\noindent \textbf{Data availability} Our manuscript has no associated data.

\bigskip

\noindent \textbf{Competing interest} The authors declare that there is no competing interest.

\bibliographystyle{plain}

\end{document}